\newcounter{stepnb}
\tikzstyle{nodo}=[circle,draw,fill,inner sep=0pt,minimum size=%
\tikzstyle{infinito}=[circle,inner sep=0pt,minimum size=0mm]
\newtheorem{thm}{Theorem}[section]
\newtheorem{lemma}[thm]{Lemma}
\numberwithin{equation}{section}
\newcommand{\be}{\begin{equation}}
\newcommand{\eq}{\end{equation}}
\begin{document}
\title[]{An overview of the local limit of non-local conservation laws, and a new proof of a compactness estimate
}

\author[M.~Colombo]{Maria Colombo}
\address{M.C. EPFL B, Station 8, CH-1015 Lausanne, Switzerland.}
\email{maria.colombo@epfl.ch}
\author[G.~Crippa]{Gianluca Crippa}
\address{G.C. Departement Mathematik und Informatik,
Universit\"at Basel, Spiegelgasse 1, CH-4051 Basel, Switzerland.}
\email{gianluca.crippa@unibas.ch}
\author[E.~Marconi]{Elio Marconi}
\address{E.M. Dipartimento di Matematica ``Tullio Levi-Civita", Universit\`a degli Studi di Padova, Via Trieste 63,
35131 Padua, Italy}
\email{elio.marconi@math.unipd.it}
\author[L.~V.~Spinolo]{Laura V.~Spinolo}
\address{L.V.S. CNR-IMATI ``E. Magenes", via Ferrata 5, I-27100 Pavia, Italy.}
\email{spinolo@imati.cnr.it}
\maketitle

\begin{footnotesize}
 Consider a non-local (i.e., involving a convolution term) conservation law: when the convolution term converges to a Dirac delta, in the limit we formally recover a classical (or ``local") conservation law. In this note we overview recent progress on this so-called non-local to local limit and in particular we discuss the case of anistropic kernels, which is extremely relevant in view of applications to traffic models. We also provide a new proof of a related compactness estimate. 
\medskip

\noindent
{\sc Keywords:} non-local to local limit, non-local conservation laws, singular local limit, traffic models.

\medskip\noindent
{\sc MSC (2020):  35L65}

\end{footnotesize}

\section{Introduction}

In recent years, the analysis of non-local conservation laws in the form 
\begin{equation} \label{e:nonloc}
     \partial_t u + \partial_x [ V(u \ast \eta) u] = 0 
\end{equation}
has attracted considerable attention. In the previous expression, the unknown $u$ is real-valued and depends on the variables $(t, x) \in \mathbb R_+ \times \mathbb R$. The Lipschitz continuous function $V: \mathbb R \to \mathbb R$ is assigned and so is the convolution kernel $\eta: \mathbb R\to \mathbb R_+$. The symbol $\ast$ denotes the convolution with respect to the space variable only. The analysis of~\eqref{e:nonloc} and of related equations is motivated by several applications concerning sedimentation~\cite{Sedimentation}, supply chains~\cite{ColomboHertyMercier}, pedestrian~\cite{ColomboGaravelloMercier} and vehicular~\cite{Chiarello} traffic models, and others. 
Existence and uniqueness results have been obtained under suitable assumptions in various works, see for instance~\cite{BlandinGoatin,ColomboGaravelloMercier,CLM,KeimerPflug}. 

In the present contribution we focus on the so-called non-local to local limit problem, which we now introduce. Fix $\varepsilon>0$ and consider the family of Cauchy problems 
\begin{equation} \label{e:nlc}
     \left\{
     \begin{array}{ll}
     \partial_t u_\varepsilon + \partial_x [ V(u_\varepsilon \ast \eta_\varepsilon) u_\varepsilon ] = 0 \\
     u_\varepsilon (0, \cdot) = u_0 \\
     \end{array}
     \right. 
     \qquad 
     \text{with $\eta_\varepsilon (x): = \frac{1}{\varepsilon} \eta \left( \frac{x}{\varepsilon}\right),$} 
\end{equation}
where the initial datum $u_0: \mathbb R \to \mathbb R$ satisfies suitable assumptions discussed in the following.  In the non-local to local limit $\varepsilon \to 0^+$ the convolution kernel $\eta_\varepsilon \stackrel{*}{\rightharpoonup} \delta_{x=0}$ weakly-$^\ast$ in the sense of measures. Consequently, what one \emph{formally} recovers in the vanishing $\varepsilon$ limit is the scalar conservation law 
\begin{equation} \label{e:cl}
       \left\{
     \begin{array}{ll}
     \partial_t u + \partial_x [ V(u) u] = 0 \\
     u (0, \cdot) = u_0 . \\
     \end{array}
     \right. 
\end{equation}
Whether or not one can rigorously establish the convergence of  $u_\varepsilon$ to the entropy admissible solution of the conservation law~\eqref{e:cl} has been the target   
of recent investigations, that we briefly overview in the next section. We refer instead to the very classical references~\cite{Dafermos_book,Kruzkov} for the definition of entropy admissible solutions of a conservation law. 

This note is organized as follows: in \S\ref{s:nltl} we discuss recent results on the non-local to local limit. In \S\ref{s:heu} and \S\ref{s:complete} we provide an alternative proof of the main compactness estimate in~\cite{ColomboCrippaMarconiSpinolo2} concerning the non-local to local limit, which is estimate~\eqref{e:tv} below. The proof of~\eqref{e:tv} we discuss here is longer and less direct then the original one given in~\cite{ColomboCrippaMarconiSpinolo2}, but we think it is of interest as we feel it is more transparent as it more clearly elucidates the basic mechanism yielding compactness. In \S\ref{s:heu} we discuss an heuristic argument, whereas in \S\ref{s:complete} we provide the complete proof.

\section{The non-local to local limit: an overview} \label{s:nltl}
Let us consider the family of Cauchy problems~\eqref{e:nlc}: one of the main challenges in studying the vanishing $\varepsilon$ limit is that, owing to the presence of the non-local term, it is fairly hard to gain explicit insights on the precise behavior of the solution, and henceforth useful compactness estimates. Note furthermore that, if $u_0 \in L^1(\mathbb R)$, it is fairly easy to establish bounds on $\| u_\varepsilon (t, \cdot)\|_{L^1(\mathbb R)}$ that are uniform in $\varepsilon$ and $t$.  This in turn implies that, up to subsequences, the family $u_\varepsilon$ converges to some limit measure weakly$^\ast$ in the sense of measures on $[0, T] \times \mathbb R$, for every $T>0$. However, weak convergence alone does not suffice to pass to the limit in the nonlinear term $V(u_\varepsilon \ast \eta_\varepsilon) u_\varepsilon$ and henceforth to pass from~\eqref{e:nlc} to~\eqref{e:cl}.
  
To the best of our knowledge, one of the very first results concerning the vanishing $\varepsilon$ limit is the paper by Zumbrun~\cite{Zumbrun}, where, among other things, the author establishes convergence of $u_\varepsilon$ to the entropy admissible solution $u$ of~\eqref{e:cl} under the assumptions that $\eta$ is even, that is $\eta(x) = \eta (-x)$, and (quite restrictively)  that $u$ is smooth. 

The non-local to local limit problem is again addressed in the more recent paper~\cite{ACT} and investigated by relying on numerical experiments. The numerical simulations exhibited in~\cite{ACT} suggest that, as $\varepsilon$ gets closer and closer to $0$, the solution $u_\varepsilon$ of~\eqref{e:nlc} approaches the entropy admissible solution of~\eqref{e:cl}. In~\cite{ColomboCrippaSpinolo} the authors provide counter-examples showing that actually this is not always the case. More precisely, the 
counter-examples dictate that in general $u_\varepsilon$ does not converge to the entropy admissible solution, not even weakly or up to subsequences. This is very loosely speaking achieved by singling out a property (a different one in each counter-example) which is satisfied by the solution $u_\varepsilon$ of~\eqref{e:nlc} for every $\varepsilon$, passes to the limit, but is \emph{not} satisfied by the entropy admissible solution of~\eqref{e:cl}.  Another result in~\cite{ColomboCrippaSpinolo} focuses on the ``viscous counter-part" of the non-local to local limit problem. More precisely, fix $\nu>0$ and consider the family of viscous non-local problems 
\begin{equation} \label{e:nlcv}
     \left\{
     \begin{array}{ll}
     \partial_t u_{\varepsilon \nu} + \partial_x [ V(u_{\varepsilon \nu} \ast \eta_\varepsilon) u_{\varepsilon \nu} ] = \nu \partial_{xx} u_{\varepsilon \nu}\\
     u_{\varepsilon \nu} (0, \cdot) = u_0 \\
     \end{array}
     \right. 
     \qquad 
     \text{with $\eta_\varepsilon (x): = \frac{1}{\varepsilon} \eta \left( \frac{x}{\varepsilon}\right).$} 
\end{equation}
Existence and uniqueness results for~\eqref{e:nlcv} can be established by relying on fairly standard techniques, see \S2 in~\cite{ColomboCrippaSpinolo}. Extending previous results by Calderoni and Pulvirenti~\cite{CP}, Theorem 1.1 in~\cite{ColomboCrippaSpinolo} states that, under quite general assumptions on $V$, $\eta$ and $u_0$, the solutions $u_{\varepsilon \nu}$ converge as $\varepsilon \to 0^+$ to the solution of the viscous conservation law 
\begin{equation} \label{e:cv}
     \left\{
     \begin{array}{ll}
     \partial_t u_{\nu} + \partial_x [ V(u_{\nu}) u_{ \nu} ] = \nu \partial_{xx} u_{\nu}\\
     u_{\nu} (0, \cdot) = u_0. \\
     \end{array}
     \right. 
\end{equation}
This result is also relevant from the numerical viewpoint, see the related discussion in~\cite{ColomboCrippaGraffSpinolo}. 
Wrapping up, one has the following diagram:
\begin{equation}
\label{e:disegno}
\minCDarrowwidth70pt
\begin{CD}
 \partial_t u_{\varepsilon \nu} + \partial_x \big[ u_{\varepsilon \nu} V(u_{\varepsilon \nu} \ast \eta_\varepsilon) \big] = \nu \partial_{xx} u_{\varepsilon \nu}     @> \varepsilon \to 0^+  >  \text{\cite[Theorem 1.1]{ColomboCrippaSpinolo}}>  \partial_t u_\nu + \partial_x \big[ u_\nu V (u_\nu) \big] = \nu \partial_{xx} u_\nu \\
@V \nu \to 0^+  V \text{\cite[Proposition 1.2]{ColomboCrippaSpinolo}} 
V        @V \nu \to 0^+ V \text{Kru{\v{z}}kov's Theorem} V\\
 \partial_t u_{\varepsilon } + \partial_x \big[ u_{\varepsilon } V(u_{\varepsilon } \ast \eta_\varepsilon) \big] = 0      @>  \varepsilon \to 0^+ > \text{False in general}>   \partial_t u + \partial_x \big[ u V(u) \big] = 0
\end{CD}
\end{equation}
Note that the ``vertical" convergence at the left of the previous diagram follows from Proposition 1.2 in~\cite{ColomboCrippaSpinolo} and relies on the extension of classical parabolic compactenss estimates to the nonlocal setting, whereas the convergence of the solutions of the viscous conservation law to the entropy admissible solution of~\eqref{e:cl} dates back to the by-now classical work of Kru{\v{z}}kov~\cite{Kruzkov}. See also~\cite{CDNKP} for more recent results on the ``diagonal" convergence in the previous diagram. 

Going back to the original (non-viscous) non-local to local limit problem, the above mentioned counter-examples in~\cite{ColomboCrippaSpinolo} left open the possibility of establishing convergence in a more specific setting. As a matter of fact, in the last very few years several results have been obtained under assumptions that are fairly natural in view of applications to traffic flow models, which we now discuss. 

The archetype of fluido-dynamic traffic models is the celebrated LWR model, introduced in the works by Lighthill and Whitham~\cite{LW} and Richards~\cite{R}, which involves a conservation law like the one at the first line of~\eqref{e:cl}. In the LWR model the unknown $u$ represents the density of vehicles, and $V$ their speed. The model postulates that drivers tune their speed based on the pointwise traffic density, and, since the most common reaction to an increase in the car density is deceleration, the assumptions usually imposed on $V$ are 
\begin{equation} \label{e:V}
    V \in \mathrm{Lip}(\mathbb R), \qquad V' \leq 0.
\end{equation}
We now turn to the assumptions satisfied by the datum $u_0$: since it represents the initial density, one assumes $u_0 \ge 0$. The datum $u_0$ should also not exceed the maximum possible density (corresponding to bumper-to-bumper packing), which with no loss of generality we can assume normalized to $1$. Wrapping up, 
\begin{equation}
\label{e:u0}
     u_0 \in L^\infty (\mathbb R), \quad 0 \leq u_0 (x)\leq 1 \; \text{for a.e. $x \in \mathbb R$}. 
\end{equation}
Let us now turn to the non-local LWR model~\eqref{e:nonloc}: compared to the classical one, this version of the model aims at taking into account that drivers tune their speed according to the car density in a suitable  neighborhood of their position, rather than to the pointwise density only.  Also, it turns out that the presence of the convolution term in~\eqref{e:nonloc} rules out the possible presence of infinite acceleration, a well-known drawback of the classical LWR model.  The hypotheses most commonly imposed on $\eta$ in non-local LWR models are then 
\begin{equation} \label{e:eta}
      \eta \in L^1(\mathbb R) \cap L^\infty (\mathbb R), \quad  \eta \ge 0,  \quad \int_{\mathbb R} \eta(x) dx =1, \quad 
 \mathrm{supp} \, \eta \subseteq \mathbb R_-, \qquad  \eta \text{ non-decreasing on $\mathbb R_-$}. 
      \end{equation}
The first three conditions in~\eqref{e:eta} are fairly standard assumptions for convolution kernels. More interesting is the second-last condition, a ``look-ahead-only" assumption that takes into account that drivers tune their speed according to the downstream traffic density only. Finally, the last condition in~\eqref{e:eta} expresses the fact that drivers are more deeply influenced by closer vehicles rather than by those that are further away. 

The analysis on the non-local LWR model~\eqref{e:nlc} under~\eqref{e:V},\eqref{e:u0} and \eqref{e:eta} was initiated by Blandin and Goatin~\cite{BlandinGoatin}, who in particular established the maximum principle 
\begin{equation}
\label{e:mp2}
     0 \leq u_\varepsilon (t, x) \leq 1 \; \text{for a.e. $(t, x) \in \mathbb R_+ \times \mathbb R$},
\end{equation}
a remarkable property in view of applications that is not satisfied by general non-local conservation laws. Keimer and Pflug~\cite{KeimerPflug2}  were instead the first, to the best of our knowledge, to investigate, under the further assumption that the initial datum $u_0$ is a monotone function, the local limit of the non-local LWR model. 
In~\cite{BressanShen,BressanShen2} Bressan and Shen established convergence for general initial data of bounded total variation and bounded away from $0$. The analysis in~\cite{BressanShen,BressanShen2} relies on a change of variables that allows to rewrite the equation at the first line of~\eqref{e:nlc} as a system with relaxation, and requires that $\eta (x) = e^x \mathbbm{1}_{\mathbb R_-}(x)$, where $\mathbbm{1}_{\mathbb R_-}$ denotes the characteristic function of the negative real axis. 
In~\cite{ColomboCrippaMarconiSpinolo} we established convergence under quite general assumptions on the convolution kernel $\eta$, but requiring more restrictive assumptions than in~\cite{BressanShen,BressanShen2} on the initial datum.  

Without entering the details, an important technical point is that a key issue in 
all the above works~\cite{BressanShen,BressanShen2,ColomboCrippaMarconiSpinolo,KeimerPflug2} is showing that, under the different assumptions considered in each paper, the total variation $\mathrm{TotVar} \, u_\varepsilon (t, \cdot)$ is uniformly bounded in $\varepsilon$ and $t$. Owing to the Helly-Kolmogorov-Fr\'echet Theorem, this yields compactness of the family $\{ u_\varepsilon (t, \cdot)\}$ (some more work is then required to show that $u_\varepsilon$ converges to the entropy admissible solution of~\eqref{e:cl}). As a matter of fact, the prevailing feeling in the nonlocal conservation laws community was that the assumptions~\eqref{e:V},\eqref{e:u0} and~\eqref{e:eta} should suffice to establish a uniform bound on $\mathrm{TotVar} \, u_\varepsilon (t, \cdot)$: this was also supported by numerical evidence, see~\cite{ACT,BlandinGoatin}. However, in~\cite{ColomboCrippaMarconiSpinolo} we exhibit a counter-example showing that actually this is not always the case: in~\cite[\S4]{ColomboCrippaMarconiSpinolo} we construct an initial datum $u_0$ such that $\mathrm{TotVar} \, u_0 < + \infty$, but the solution of~\eqref{e:nlc} satisfies 
\begin{equation} \label{e:tvblowup}
     \sup_{\varepsilon>0} \mathrm{TotVar} \, u_\varepsilon (t, \cdot) = + \infty, \quad \text{for every $t>0$.}
\end{equation}
This happens when $V(u) =1-u$ and for a fairly large class of convolution kernels satisfying~\eqref{e:eta}. 
In particular, the choice $\eta (x) = e^x \mathbbm{1}_{\mathbb R_-}(x)$ is possible, and the reason why the counter-example does not contradict the uniform bounds established in~\cite{BressanShen,ColomboCrippaMarconiSpinolo} is because 
the initial datum $u_0$ triggering the total variation blow-up attains the value $0$, which is not allowed in~\cite{BressanShen,ColomboCrippaMarconiSpinolo}. And indeed, by looking at the construction of the counter-example, one realizes that the fact that $u_0$ vanishes on suitable intervals is essential in the proof of~\eqref{e:tvblowup}. 

We remark in passing that the maximum principle~\eqref{e:mp2} yields compactnes in the $L^\infty$ weak-$\ast$
topology, but, as pointed out before, owing to non-linearity weak convergence alone does not suffice to pass to the limit in~\eqref{e:nlc} to get~\eqref{e:cl}. Wrapping up, the total variation blow up~\eqref{e:tvblowup} apparently sets severe constraints on the possibility of establishing convergence in the non-local to local limit. An elegant way out this obstruction was found in the paper~\cite{CocliteCoronDNKeimerPflug}: rather than $u_\varepsilon$, the authors consider the convolution term 
\begin{equation}  \label{e:w}
      w_\varepsilon (t, x) : = u_\varepsilon \ast \eta_\varepsilon (t, x) = \frac{1}{\varepsilon} \int_{x}^{+\infty} \eta \left( \frac{x-y}{\varepsilon} \right) u_\varepsilon (t, y) dy 
\end{equation}
and prove that, under suitable assumptions, the total variation $\mathrm{TotVar} \, w_\varepsilon (t, \cdot)$ is a monotone 
non-increasing function of time, and as such uniformly bounded in $\varepsilon$ and $t>0$. In other words, the total variation of $u_\varepsilon (t, \cdot)$ may blow up, but considering the convolution term allows to gain a bit of regularity sufficient to establish total variation bounds. As a drawback, the analysis in~\cite{CocliteCoronDNKeimerPflug}, as the one in~\cite{BressanShen,BressanShen2},  is restricted to the case $\eta (x) = e^x \mathbbm{1}_{\mathbb R_-}(x)$, which entails the algebraic identity
$$
      u_\varepsilon = w_\varepsilon - \varepsilon \partial_x w_\varepsilon. 
$$
The above identity, in turn, allows the authors of~\cite{CocliteCoronDNKeimerPflug} to find an equation for $w_\varepsilon$ which does not contain terms involving $u_\varepsilon$ (every time the authors have one such term, they replace it by using the above identity), and this is crucially used in the proof of the total variation bound. The convergence argument is then quickly concluded by relying on the argument in~\cite{BressanShen,BressanShen2}. See also~\cite{OttoAutori} for other recent results on the non-local to local limit in the case of exponential kernels.  

In the recent paper~\cite{ColomboCrippaMarconiSpinolo2} we extend the analysis in~\cite{CocliteCoronDNKeimerPflug} by removing the assumption that $\eta (x) = e^x \mathbbm{1}_{\mathbb R_-}(x)$. 
More precisely, in~\cite{ColomboCrippaMarconiSpinolo2} we only impose conditions~\eqref{e:eta}, entirely natural in view of applications to traffic models, plus the convexity assumption 
\begin{equation}
\label{eqn:convex}
\eta \mbox{ is convex on }  \mathbb R_-,
\end{equation}
that we discuss in the following. We now quote \cite[Theorem 1.1]{ColomboCrippaMarconiSpinolo2}.
\begin{thm}\label{t:main}Assume that $u_0$, $V$ and $\eta$ satisfy~\eqref{e:V},~\eqref{e:u0}, and~\eqref{e:eta},~\eqref{eqn:convex}, respectively. Assume furthermore that 
$\mathrm{TotVar} \, u_0 < + \infty$. Then 
\begin{equation} \label{e:tv}
     \mathrm{TotVar} \, w_\varepsilon(t, \cdot) \leq \mathrm{TotVar} \, w_\varepsilon(0, \cdot)  \quad \text{for every $\varepsilon>0$ and a.e. $t>0$},
\end{equation}
where $w_\varepsilon$ is the same as in~\eqref{e:w}. 
\end{thm}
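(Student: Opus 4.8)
The plan is to derive a transport-type equation for $w_\varepsilon$ and then track the evolution of $\mathrm{TotVar}\, w_\varepsilon(t,\cdot)$ by differentiating the total variation along the flow, using the sign conditions coming from~\eqref{e:V}, \eqref{e:eta} and~\eqref{eqn:convex}. First I would write down the equation solved by $w_\varepsilon = u_\varepsilon \ast \eta_\varepsilon$: convolving the first line of~\eqref{e:nlc} with $\eta_\varepsilon$ and using $\partial_x(u_\varepsilon \ast \eta_\varepsilon) = u_\varepsilon \ast \partial_x \eta_\varepsilon$, one gets an equation of the form $\partial_t w_\varepsilon + \big(V(w_\varepsilon) u_\varepsilon\big)\ast \partial_x\eta_\varepsilon = 0$, or, after integrating by parts / rearranging, a PDE in which $w_\varepsilon$ is transported by a velocity field built out of $V(w_\varepsilon)$ plus a non-local correction term that — this is the key algebraic point — has a favorable sign because of the convexity of $\eta$ on $\mathbb R_-$ (the second-last and last conditions in~\eqref{e:eta} guarantee $\eta \geq 0$, non-decreasing, so $\partial_x \eta_\varepsilon \geq 0$ on the support, and convexity controls $\partial_{xx}\eta_\varepsilon$). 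In the exponential-kernel case $\eta = e^x\ind_{\R_-}$ this correction term collapses to the algebraic identity $u_\varepsilon = w_\varepsilon - \varepsilon\partial_x w_\varepsilon$ exploited in~\cite{CocliteCoronDNKeimerPflug}; the point of the general proof is to replace that identity by a one-sided inequality/monotone structure.

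Next I would set up the total-variation estimate. The standard route is to approximate and work at the level of $\partial_x w_\varepsilon$: formally differentiate the equation for $w_\varepsilon$ in $x$, multiply by $\mathrm{sign}(\partial_x w_\varepsilon)$ (or a smooth approximation $\mathrm{sign}_\delta$), and integrate in $x$ over $\mathbb R$. The transport part $\partial_x\!\big(b_\varepsilon \partial_x w_\varepsilon\big)$ contributes, after integration by parts, a term $-\int (\partial_x b_\varepsilon)\,|\partial_x w_\varepsilon|$; the genuinely non-local terms contribute convolution integrals that must be shown to have the right sign or to be reabsorbed. The heart of the matter is to verify that, when all terms are collected, $\frac{d}{dt}\int |\partial_x w_\varepsilon(t,x)|\,dx \leq 0$. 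This is exactly where~\eqref{e:V} ($V'\leq 0$, so the nonlinearity contributes a dissipative, or at worst neutral, term of Kru\v{z}kov type) and~\eqref{eqn:convex} (which makes the remaining non-local commutator term sign-definite) enter; I would expect the bookkeeping to require splitting $\partial_x w_\varepsilon$ according to its sign and carefully pairing the convolution against the monotone/convex profile of $\eta$, using that $w_\varepsilon$ itself lies in $[0,1]$ by the maximum principle~\eqref{e:mp2}.

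The main obstacle, and the place where the argument in \S\ref{s:heu}--\S\ref{s:complete} presumably differs from the original proof in~\cite{ColomboCrippaMarconiSpinolo2}, is handling the non-local correction term rigorously rather than formally: $w_\varepsilon$ need not be smooth, $\partial_x w_\varepsilon$ is only a measure a priori (though convolution with $\eta_\varepsilon \in L^\infty$ gives $w_\varepsilon \in W^{1,1}$ in $x$ when $u_0 \in BV$, which is what makes the statement meaningful), and one must justify the differentiation in time, the chain rule with $\mathrm{sign}$, and the integration by parts. I would handle this by first establishing the estimate for smooth data and mollified kernels, obtaining the bound with constants independent of the regularization, and then passing to the limit using lower semicontinuity of the total variation and the stability of~\eqref{e:nlc} under approximation. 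A secondary technical point is the "a.e. $t>0$" in~\eqref{e:tv}: since $t\mapsto \mathrm{TotVar}\, w_\varepsilon(t,\cdot)$ is shown to be essentially non-increasing, one gets the inequality against $t=0^+$ via right-continuity / the $BV$-in-time structure inherited from the $L^1$-contraction estimates, and the value at $t=0$ is controlled by $\mathrm{TotVar}\, u_0$ since convolution does not increase total variation. The heuristic section would, I expect, carry out the formal computation above on smooth solutions to isolate the sign mechanism, and the complete section would supply the approximation scheme and the sign analysis of the convolution terms in full.
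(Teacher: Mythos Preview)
Your route---derive a transport equation for $w_\varepsilon$, differentiate in $x$, multiply by $\mathrm{sign}(\partial_x w_\varepsilon)$, integrate, and show $\tfrac{d}{dt}\int|\partial_x w_\varepsilon|\,dx\le 0$---is \emph{not} what the paper does. The paper works directly with the definition of total variation as a supremum over finite point sets. Setting $g(t)=\mathrm{TotVar}\,w_\varepsilon(t,\cdot)$ (or a truncated version $\mathrm{TotVar}_N$ so the sup is attained), at a differentiability point $t_\ast$ one picks points $x_1<\dots<x_m$ realizing the sup; these are alternately local maxima and minima of $w_\varepsilon(t_\ast,\cdot)$. One then defines $f(t)$ by transporting these points along the characteristics~\eqref{e:X} and evaluating $w_\varepsilon$ there; since $f\le g$ with equality at $t_\ast$, the touching Lemma~\ref{l:fg} gives $g'(t_\ast)=f'(t_\ast)$. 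The derivative $f'(t_\ast)$ is computed from the material-derivative identity~\eqref{e:ewgen} and reorganized into integrals over the successive intervals $[x_i,x_{i+1}]$, each with integrand $\sigma_i(t_\ast,y)\,u_\varepsilon(t_\ast,y)$. The core combinatorial step is that the alternating sums $\sum_{k=1}^j(-1)^{k+1}\eta'((x_k-y)/\varepsilon)$ have a definite sign depending on the parity of $j$; this follows by pairing consecutive terms and using that $\eta'\ge 0$ is non-decreasing on $\mathbb R_-$, i.e.\ exactly~\eqref{eqn:convex}. Combined with~\eqref{e:inmezzo}--\eqref{e:inmezzo2} and $V'\le 0$ this yields $\sigma_i\le 0$, and then $u_\varepsilon\ge 0$ finishes.

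What each approach buys: the paper's extrema-along-characteristics argument never differentiates~\eqref{e:ewgen} in $x$, so it avoids the $\mathrm{sign}$ chain rule, avoids producing $\eta''$ terms, and makes the role of convexity transparent---it is precisely the monotonicity of $\eta'$ in the alternating-sum estimate. Your Eulerian route is closer in spirit to the original proof in~\cite{ColomboCrippaMarconiSpinolo2} that this note is explicitly offering an alternative to. The genuine gap in your sketch is the sentence ``the non-local terms \dots\ must be shown to have the right sign or to be reabsorbed'': the right-hand side of~\eqref{e:ewgen} contains $u_\varepsilon$, for which there is \emph{no} control beyond $0\le u_\varepsilon\le 1$ (recall~\eqref{e:tvblowup}). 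Any viable argument must therefore be organized so that, after all manipulations, $u_\varepsilon$ appears only as a nonnegative weight against something of definite sign. You have not indicated how, after differentiating in $x$ and multiplying by $\mathrm{sign}(\partial_x w_\varepsilon)$, the resulting integrand factors this way; that factorization is exactly what the paper isolates via the functions $\sigma_i$ in~\eqref{e:sigma1}--\eqref{e:sigma}, and it is where all the content lies.
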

Once more, we remark that all assumptions~\eqref{e:V},~\eqref{e:u0}, and~\eqref{e:eta} are entirely consistent with the traffic models framework. In the next two sections we provide a proof of Theorem~\ref{t:main} alternative to the original one in~\cite{ColomboCrippaMarconiSpinolo2}. For the time being we point out that by relying on~\eqref{e:tv} 
and some further elementary arguments it is fairly easy to see that, up to subsequences, 
\begin{equation}
\label{e:converge}
        w_\varepsilon \to u \; \text{strongly in $L^1_\mathrm{loc} (\mathbb R_+ \times \mathbb R)$}, 
       \qquad  u_\varepsilon \stackrel{*}{\rightharpoonup} u \; \text{weakly$^\ast$ in $L^\infty (\mathbb R_+ \times \mathbb R)$},
\end{equation}
where $u$ is a \emph{distributional} solution of the Cauchy problem~\eqref{e:cl}.  What a priori is not at all clear is that 
$u$ is the \emph{entropy admissible} solution of~\eqref{e:cl}: to get this, we introduce a new and fairly general entropy admissibility criterion for non-local to local limits, see~\cite[Theorem 1.2]{ColomboCrippaMarconiSpinolo2}, which eventually concludes the convergence proof. By relying on an argument due to Kuznetsov~\cite{Kuznetsov} we also get a convergence rate, see~\cite[Theorem 1.3]{ColomboCrippaMarconiSpinolo2}. What is left to be discussed is~\eqref{eqn:convex}, the only one among of our assumptions that is not entirely natural in view of the applications to traffic models: for instance, a convolution kernel commonly used in applications is $\eta (x) = \mathbbm{1}_{[-1, 0]}$, which violates~\eqref{eqn:convex}. It turns out that, if $\eta$ does not satisfy~\eqref{eqn:convex}, then our main compactness estimate~\eqref{e:tv} fails in general: an explicit counter-example is constructed in~\cite[\S6]{ColomboCrippaMarconiSpinolo2}. 

To conclude our overview, we touch upon a recent work concerning a  problem closely related to~\eqref{e:nlc}. In~\cite{FGKP} the authors consider the ``non-local in velocity" problem
\begin{equation} \label{e:nlv}
     \left\{
     \begin{array}{ll}
     \partial_t u_\varepsilon + \partial_x [ (V(u_\varepsilon) \ast \eta_\varepsilon) u_\varepsilon ] = 0 \\
     u_\varepsilon (0, \cdot) = u_0 \\
     \end{array}
     \right. 
     \qquad 
     \text{with $\eta_\varepsilon (x): = \frac{1}{\varepsilon} \eta \left( \frac{x}{\varepsilon}\right),$} 
\end{equation} 
where the convolution is applied to the velocity function directly, rather than on the solution $u_\varepsilon$. Note that~\eqref{e:nlv} coincides with~\eqref{e:nlc} in the linear case $V(u) = 1-u$, but differs in general. In~\cite{FGKP} the authors show that the solution $u_\varepsilon$ of~\eqref{e:nlc} converge to the entropy admissible solution of~\eqref{e:nlc} if either $u_0$ is monotone or $\eta (x) = e^x \mathbbm{1}_{\mathbb R_-}(x)$. See also~\cite{CKR,DHSS,KPpreprint} for other recent work concerning problems related to~\eqref{e:nlc}.

\section{Proof of Theorem~\ref{t:main}: heuristic argument}\label{s:heu}
In this section we discuss an handwaving argument for the total variation estimate~\eqref{e:tv}, which we hope provides the main idea underpinning the rigorous argument. The complete proof is then established in the next section. 
\subsection{Preliminary results}
We briefly recall some known results (see for instance~\cite{ColomboCrippaMarconiSpinolo2}) that we need in the following. First, owing to~\eqref{e:mp2} we have 
\begin{equation}\label{e:mp}
   0 \leq w_\varepsilon (t, x) \leq 1  \; \text{for a.e. $(t, x) \in \mathbb R_+ \times \mathbb R$}.
\end{equation}
Second, $w_\varepsilon$ is a Lipschitz continuous function, $w_\varepsilon \in W^{1, \infty} (\mathbb R_+ \times \mathbb R)$. 
To fix the notation, we also introduce the characteristic line $X_\varepsilon(\cdot, x, s)$, which is the solution of the Cauchy problem 
\begin{equation} \label{e:X}
\left\{
\begin{array}{ll}
    \displaystyle{\frac{d  X_\varepsilon }{dt} = V(w_\varepsilon (t, X_\varepsilon))}\\
    \phantom{cc} \\
    X_\varepsilon (s, x, s) = x. \\
\end{array}
\right.
\end{equation}
A straightforward computation yields the expression for the material derivative of $w_\varepsilon$, namely 
\begin{equation}
\label{e:ewgen}
      \partial_t w_\varepsilon + V(w_\varepsilon) \partial_x w_\varepsilon = 
      \frac{1}{\varepsilon^2} \int_x^{+\infty} \eta' \left( \frac{x-y}{\varepsilon} \right) [V(w_\varepsilon (t, x)) - V(w_\varepsilon (t, y) )] u_\varepsilon (t, y)dy. 
\end{equation}
Also, one can also show that the map $g: \mathbb R_+ \to \mathbb R$ define by setting 
\begin{equation} \label{e:tvmap}
    g(t) : = \mathrm{TotVar} \, w_\varepsilon (t, \cdot)
\end{equation}
is Lipschitz continuous. 
To conclude, we state a known and elementary result on Lipschitz continuous functions that we need in the following, and provide the proof for the sake of completeness. 
\begin{lemma} \label{l:fg}
Let $I \subseteq \mathbb R$ be an open interval and $f, g : I \to \mathbb R$ two Lipschitz continuous functions  such
that $f \leq g$ on I. If $f$ and $g$ are both differentiable at the point $t_\ast \in I$ and $f(t_\ast) = g(t_\ast)$ then $f'(t_\ast) = g'(t_\ast)$.  
\end{lemma}
\begin{proof}
Up to replacing $f$ by $f-g$, we can assume with no loss of generality that $g=0$. Since $f \leq 0$ and $f(t_\ast)=0$ then $t_\ast$ is a point of maximum for $f$, which yields $f'(t_\ast) = 0$.
\end{proof}
\subsection{The heuristic argument}
Recall~\eqref{e:tvmap} and assume for a moment that we have shown that 
\begin{equation} \label{e:vagiu}
     g'(t_\ast)  \leq 0
\end{equation}
at every point $t_\ast$ at which the map~\eqref{e:tvmap} is differentiable. An integration in time then yields~\eqref{e:tv}. 

In our heuristic argument we establish~\eqref{e:vagiu} at every point $t_\ast$ of differentiability for $g$ that satisfies some further suitable conditions. More precisely, we assume that at time $t_\ast$ the map $w_\varepsilon (t_\ast, \cdot)$ is compactly supported and has a finite number of local maxima and local minima. To focus on the main issues and make the other details as simple as possible, we perform the explicit computations in the case where $w_\varepsilon (t_\ast, \cdot)$ has exactly two local maxima, which we term $x_1$ and $x_3$, and one local minimum, which we term 
$x_2$, satisfying 
\begin{equation}
\label{e:ordinamento}
    x_1 < x_2 < x_3. 
\end{equation}
See Figure~\ref{f} for a representation. The analysis extends to the case of finitely many extrema, at the price of some elementary but rather tedious computations  (see also the argument in \S\ref{s:complete}). 
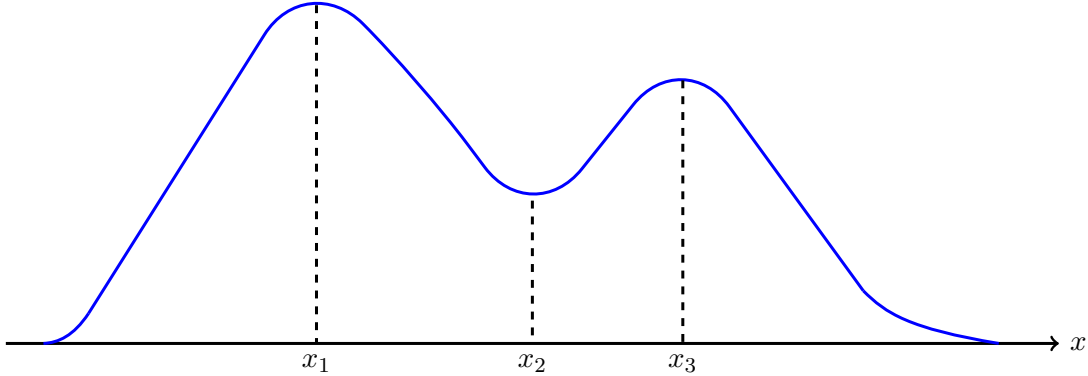
\begin{figure}
\begin{center}
\caption{In blue the function $w_\varepsilon (t_\ast, \cdot)$ in the heuristic argument.} 
\label{f}
\begin{tikzpicture}
\draw[line width=0.4mm, ->]   (-7, 0) -- (7, 0) node[anchor=west] {$x$}; 
 \draw[line width=0.4mm, blue] (-6.5,0) .. controls (-6.3, 0.01) and (-6.1, 0.1).. (-5.9, 0.4);
 \draw[line width=0.4mm, blue] [rounded corners=30pt] (-5.9, 0.4) -- (-3, 5) -- (-1.5, 3.5)-- (0, 1.5)--(2, 4)--(4.4, 0.7);
  \draw[line width=0.4mm, blue] (4.4,0.7) .. controls (4.7, 0.4) and (5, 0.2).. (6.2, 0);
\draw[dashed, line width=0.4mm]   (-2.87, 4.5) -- (-2.87, 0) node[anchor=north] {$x_1$};   
\draw[dashed, line width=0.4mm]   (0, 1.9) -- (0, 0) node[anchor=north] {$x_2$};  
\draw[dashed, line width=0.4mm]   (2, 3.5) -- (2, 0) node[anchor=north] {$x_3$};   
\end{tikzpicture}
\end{center}
\end{figure}
We now set 
\begin{equation}\label{e:f}
    f(t) : = 2 \big[w_\varepsilon (t_\ast, X_\varepsilon (t, x_1, t_\ast)) - w_\varepsilon (t_\ast, X_\varepsilon(t, x_2, t_\ast))+  
        w_\varepsilon (t_\ast, X_\varepsilon (t, x_3, t_\ast)) \big],
\end{equation}
where $X_\varepsilon$ is the same as in~\eqref{e:X}, and recall~\eqref{e:tvmap}. Note that $f(t) \leq g(t)$ for every $t\ge 0$. Under our assumptions on $w_\varepsilon (t_\ast, \cdot)$ (see Figure~\ref{f}) we also have 
\begin{equation}
\label{e:sonouguali}
        g(t_\ast) =  \mathrm{TotVar} \, w_\varepsilon (t_\ast, \cdot) =   2 \big[ w_\varepsilon (t_\ast, x_1) - w_\varepsilon (t_\ast, x_2)+  
        w_\varepsilon (t_\ast, x_3) \big] = f(t_\ast).
\end{equation}
 By applying Lemma~\ref{l:fg} we conclude that to establish~\eqref{e:vagiu} it suffices to show that 
\begin{equation}\label{e:fd}
f'(t_\ast) \leq 0. 
\end{equation}
Towards this end, we recall the expression~\eqref{e:ewgen} for the material derivative, and conclude that 
\begin{equation*}
\begin{split}
f'(t_\ast)  = &  2 \left[ \frac{1}{\varepsilon^2} \int_{x_1}^{+\infty} \eta' \left( \frac{x_1-y}{\varepsilon} \right) [V(w_\varepsilon (t_\ast, x_1)) - V(w_\varepsilon (t_\ast, y) )] u_\varepsilon (t_\ast, y)dy \right. \\
&  - \frac{1}{\varepsilon^2} \int_{x_2}^{+\infty} \eta' \left( \frac{x_2-y}{\varepsilon} \right) [V(w_\varepsilon (t_\ast, x_2)) - V(w_\varepsilon (t_\ast, y) )] u_\varepsilon (t_\ast, y)dy \\
& + \left. \frac{1}{\varepsilon^2} \int_{x_3}^{+\infty} \eta' \left( \frac{x_3-y}{\varepsilon} \right) [V(w_\varepsilon (t_\ast, x_3)) - V(w_\varepsilon (t_\ast, y) )] u_\varepsilon (t_\ast, y)dy \right].
\end{split}
\end{equation*}
We now recall~\eqref{e:ordinamento} and rearrange the above terms as 
\begin{equation*}
\begin{split}
&f'(t_\ast)  =   2 \Bigg[ \frac{1}{\varepsilon^2}\underbrace{\int_{x_1}^{x_2} \eta' \left( \frac{x_1-y}{\varepsilon} \right) [V(w_\varepsilon (t_\ast, x_1)) - V(w_\varepsilon (t_\ast, y) )] u_\varepsilon (t_\ast, y)dy}_{:=I_1}  \\
& +  \frac{1}{\varepsilon^2} \underbrace{\int_{x_2}^{+ \infty} \eta' \left( \frac{x_1-y}{\varepsilon} \right) [V(w_\varepsilon (t_\ast, x_1)) - V(w_\varepsilon (t_\ast, x_2) )] u_\varepsilon (t_\ast, y)dy}_{:=I_2} \\
& + \frac{1}{\varepsilon^2} \underbrace{\int_{x_2}^{x_3} \left[ \eta' \left( \frac{x_1-y}{\varepsilon} \right) -\eta' \left( \frac{x_2-y}{\varepsilon} \right)
        \right] [V(w_\varepsilon (t_\ast, x_2)) - V(w_\varepsilon (t_\ast, y) )] u_\varepsilon (t_\ast, y)dy}_{:=I_3}  \\
&  + \frac{1}{\varepsilon^2} \underbrace{\int_{x_3}^{+ \infty} \left[ \eta' \left( \frac{x_1-y}{\varepsilon} \right) -\eta' \left( \frac{x_2-y}{\varepsilon} \right)
        \right] [V(w_\varepsilon (t_\ast, x_2)) - V(w_\varepsilon (t_\ast, x_3) )] u_\varepsilon (t_\ast, y)dy}_{:=I_4}  \\
& + \! \frac{1}{\varepsilon^2} \! \underbrace{\int_{x_3}^{+ \infty} \! \!  \left[ \!  \eta' \left(\!  \frac{x_1-y}{\varepsilon} \! \right)\! \!   -\! \eta' \left(\!  \frac{x_2-y}{\varepsilon} \! \right)
         \! + \!  \eta' \left( \!  \frac{x_3-y}{\varepsilon} \! \right) 
        \! \right] [V(w_\varepsilon (t_\ast, x_3))\!  -\!  V(w_\varepsilon (t_\ast, y) )] u_\varepsilon (t_\ast, y)dy}_{:=I_5} \! \Bigg].       
\end{split}
\end{equation*}
To control $I_1$ and $I_2$, we recall that  by assumption (see Figure~\ref{f})
\begin{equation*}
\begin{split}
   w_\varepsilon  (t_\ast, x_1) \ge w_\varepsilon (t_\ast, y)&  \; \text{for every $y \in [x_1, x_2]$} \\
   & \stackrel{V'\leq 0}{\implies} V(w_\varepsilon (t_\ast, x_1)) \leq  V(w_\varepsilon (t_\ast, y) ) \; \text{for every $y \in [x_1, x_2]$}. 
\end{split}
\end{equation*}
Since $\eta' \ge 0$ owing to~\eqref{e:eta} and $u_\varepsilon \ge 0$ by~\eqref{e:mp}, this yields $I_1 \leq 0$, $I_2 \leq 0$. 
To control $I_3$ and $I_4$, we point out that 
\begin{equation*}
\begin{split}
    w_\varepsilon (t_\ast, x_2) \leq w_\varepsilon (t_\ast, y) & \; \text{for every $y \in [x_2, x_3]$} \\ & 
   \stackrel{V'\leq 0}{\implies} V(w_\varepsilon (t_\ast, x_2)) \ge  V(w_\varepsilon (t_\ast, y) ) \; \text{for every $y \in [x_2, x_3]$}. 
\end{split}
\end{equation*}
Since by assumption~\eqref{eqn:convex} the function $\eta'$ is monotone non-decreasing on $\mathbb R_-$, we have 
$$
    \eta' \left( \frac{x_1-y}{\varepsilon} \right) \leq \eta' \left( \frac{x_2-y}{\varepsilon} \right)
$$   
and hence $I_3 \leq 0$, $I_4 \leq 0$.  We are left to control $I_5$. By arguing as before, we have 
\begin{equation*}
\begin{split}
    w_\varepsilon (t_\ast, x_3) \ge w_\varepsilon (t_\ast, y) & \; \text{for every $y \in [x_3, + \infty[$} \\ &
   \stackrel{V'\leq 0}{\implies} V(w_\varepsilon (t_\ast, x_3)) \leq  V(w_\varepsilon (t_\ast, y) ) \; \text{for every $y \in [x_3, + \infty[$} 
\end{split}
\end{equation*}
and 
$$
    \eta' \left( \frac{x_1-y}{\varepsilon} \right) -\eta' \left( \frac{x_2-y}{\varepsilon} \right)
         +  \eta' \left( \frac{x_3-y}{\varepsilon} \right)\stackrel{\eta'\ge 0}{\ge} -\eta' \left( \frac{x_2-y}{\varepsilon} \right)
         +  \eta' \left( \frac{x_3-y}{\varepsilon} \right) \stackrel{\eqref{eqn:convex}}{\ge} 0.
$$
This yields $I_5 \leq 0$ and hence concludes the proof of~\eqref{e:fd}. 
\section{Rigorous proof of Theorem~\ref{t:main}} \label{s:complete}
In this section we provide the complete proof of~\eqref{e:tv}. In \S\ref{ss:data} we establish it under some further assumptions on the data, which are then removed through a fairly standard approximation argument in 
\S\ref{ss:app}.
\subsection{Proof under further assumptions on the data}\label{ss:data} 
Since $\mathrm{TotVar} \, u_0< + \infty$, then there are $u_-, u_+ \in [0, 1]$ such that 
\begin{equation} \label{e:limiti}
    \lim_{x \to - \infty} u_0(x) = u_-, \qquad \lim_{x \to + \infty} u_0(x) = u_+. 
\end{equation}
In this paragraph we establish~\eqref{e:tv} under the further assumptions 
\begin{equation}
  \label{e:hpkernel}
    \eta \in C^2 (]-\infty, 0[), \qquad \eta'' \in L^1 (]-\infty, 0[)
  \end{equation}
and 
\begin{equation}
\label{e:hpdato}
     u_0 \in \mathrm{Lip}(\mathbb R), \qquad u_0 (x) = u_+ \; \text{for every $x\ge R$,}
\end{equation}  
 for some suitable $R>0$.  
We now fix $T>0$ once and for all and establish the inequality in~\eqref{e:tv} for every $t \in [0, T]$. By the arbitrariness of $T$, this implies that~\eqref{e:tv} holds for every $t>0$.  We proceed according to the following steps. \\
 {\sc Step 1:} we point out that~\eqref{e:hpdato} yields 
 \begin{equation}
 \label{e:asintoto}
         w_\varepsilon (t, x) = u_+ \; \text{for every $t>0$ and $x\ge R + V(u_+) t$}.
 \end{equation} 
 Very loosely speaking, this is due to the fact that, by the kernel anisotropy, along the characteristic line $X_\varepsilon (\cdot, R, 0)$ the convolution term only ``sees" the value $u_+$, which in turn owing to~\eqref{e:X} implies that the speed of this characteristic line is exactly $V(u_+)$. 
 Property~\eqref{e:asintoto} can then be rigorously established by following the same argument sketched after the statement of~\cite[Lemma 13]{ColomboCrippaMarconiSpinolo}. Once can also show that 
\begin{equation}
\label{e:limitew}
      \lim_{x \to - \infty} w_\varepsilon (t, x) = u_- \quad \text{for every $t \in [0, T]$},
\end{equation}
where $u_-$ is the same as in~\eqref{e:limiti}. \\
 {\sc Step 2:} we recall that owing to~\eqref{e:V} $V(0)$ is the highest possible value attained by $V(w_\varepsilon)$, we fix $N \in \mathbb N$ such that $N \ge R$ and set 
 \begin{equation} 
\label{e:TVN} 
\begin{split}
      & \mathrm{Tot Var}_{N}  w_\varepsilon(t,\cdot):= \\
       & 
      \sup \! \left\{  \! |w_\varepsilon(t,x_{1}) \! - u_-\! |\! + \!
      \sum_{i=1}^{N-1} \! \! |w_\varepsilon(t,x_{i+1}) \! -\! w_\varepsilon(t,x_i) \!|\!:  X_\varepsilon(t, -N, 0) \! \leq \! x_1 \!\leq\! \dots \! \leq x_N\!= \!N \!+ \! V(u_+)t  \right\} \!, \\
         \end{split}
\end{equation}
where $X_\varepsilon$ is the same as in~\eqref{e:X} and, owing to~\eqref{e:asintoto}, $N + V(u_+) t = X_\varepsilon (t, N, 0)$. 
Owing to~\eqref{e:limitew} we have 
\begin{equation} \label{e:suptvN}
     \mathrm{Tot Var} \, w_\varepsilon(t,\cdot)= \lim_{N \to + \infty} \mathrm{Tot Var}_{N} w_\varepsilon(t,\cdot) = \sup_{N \in \mathbb N} \mathrm{Tot Var}_{N} w_\varepsilon(t,\cdot). 
\end{equation}
By {\sc Step 2} in the proof of~\cite[Theorem 1.1]{ColomboCrippaMarconiSpinolo2}, we have $ \mathrm{Tot Var} \, w_\varepsilon(t,\cdot) < + \infty$ for every $t\ge0$, and owing to~\eqref{e:suptvN} this implies $ \mathrm{Tot Var}_{N} w_\varepsilon(t,\cdot) < + \infty$ for every $t \ge 0$. We now establish some further properties of $ \mathrm{Tot Var}_{N} w_\varepsilon(t, \cdot)$. \\
{\sc Step 2A:} we show that the sup in~\eqref{e:TVN} is attained, i.e. for every $t\in [0, T]$ and every $N \ge R$ there are $N$ points $x_1\leq  x_2 \dots \leq  x_N$ such that 
\begin{equation}\label{e:max}
    \mathrm{Tot Var}_{N} w_\varepsilon(t,\cdot) =   |w_\varepsilon(t,x_{1}) - u_- | + \sum_{i=1}^{N-1} |w_\varepsilon(t,x_{i+1}) - w_\varepsilon(t,x_i)|. 
\end{equation}
Fix $t \in [0, T]$, $N \ge R$ and a maximizing sequence $x_{1k}, \dots, x_{N k}$ for $ |w_\varepsilon(t,x_{1}) - u_- | + \sum_{i=1}^{N -1} |w_\varepsilon(t,x_{i+1}) - w_\varepsilon(t,x_i)| $. We now point out that the points are all confined in the compact set $[ X_\varepsilon (t, -N, 0), N + V(u_+)t],$ and conclude that up to subsequences the maximizing sequences converge to some limit points $x_{1}, \dots, x_{N}$ satisfying~\eqref{e:max}. \\
 {\sc Step 2B:} we show that the map $t \mapsto    \mathrm{Tot Var}_{N} w_\varepsilon(t,\cdot)$ is Lipschitz continuous, and henceforth a.e. differentiable. 
To this end we fix $t_1, t_2 \in [0, T]$ and assume without loss of generality that $ \mathrm{Tot Var}_{ N} w_\varepsilon(t_1,\cdot) \leq \mathrm{Tot Var}_{ N} w_\varepsilon(t_2,\cdot).$ We rely on {\sc Step 2A} and find $x_1, \dots, x_N$, such that~\eqref{e:max} holds at $t =t_1$. By recalling definition~\eqref{e:TVN} we infer that 
\begin{equation*}
\begin{split}
     \mathrm{Tot Var}_{N}  w_\varepsilon(t_2,\cdot) \leq  & |w_\varepsilon(t_2, X_\varepsilon (t_2, x_{1}, t_1)) - u_- | \\ & + \sum_{i=1}^{N-1} |w_\varepsilon(t_2, X_\varepsilon (t_2, x_{i+1}, t_1)) - w_\varepsilon(t_2, X_\varepsilon (t_2, x_{i}, t_1))|,
\end{split}
\end{equation*}
which implies that 
\begin{equation*}\begin{split}
   & |  \mathrm{Tot Var}_{N} w_\varepsilon(t_2,\cdot) -  \mathrm{Tot Var}_{N} w_\varepsilon(t_1,\cdot) | = 
   \mathrm{Tot Var}_{N} w_\varepsilon(t_2,\cdot) -  \mathrm{Tot Var}_{ N} w_\varepsilon(t_1,\cdot) \\
     &   \leq      |w_\varepsilon(t_2, X_\varepsilon (t_2, x_{1}, t_1)) - u_- | -     |w_\varepsilon(t_1,x_{1}) - u_- | 
     \phantom{\sum_{i=1}^N}\\ & \qquad + 
   \sum_{i=1}^{N-1} |w_\varepsilon(t_2,X_\varepsilon (t_2, x_{i+1}, t_1)) - w_\varepsilon(t_2,X_\varepsilon (t_2, x_{i}, t_1))| - |w_\varepsilon(t_1,x_{i+1}) - w_\varepsilon(t_1,x_i)| \\
   &
   \leq   |w_\varepsilon(t_2,  X_\varepsilon (t_2, x_{1}, t_1) ) - w_\varepsilon (t_1, x_1) | + \sum_{i=1}^{N-1}  |w_\varepsilon(t_2,X_\varepsilon (t_2, x_{i+1}, t_1)) - w_\varepsilon(t_1,x_{i+1})| \\
    & \qquad +  \sum_{i=1}^{N-1}
    |w_\varepsilon(t_2,X_\varepsilon (t_2, x_{i}, t_1)) - w_\varepsilon(t_1,x_i)|.
   \end{split}
\end{equation*}
Since the functions $X_\varepsilon$ and $w_\varepsilon$ are both Lipschitz continuous  this concludes {\sc Step 2C}. \\
{\sc Step 3:}  we now fix $N \ge R$ and $t_\ast \in ]0, T[$ such that the map $t \mapsto  \mathrm{Tot Var}_{N} w_\varepsilon(t,\cdot) $ is differentiable at $t_\ast$. Assume for a moment that we have shown that 
\begin{equation} \label{e:dertvN}
    \left. \frac{d}{dt}  \mathrm{Tot Var}_{N} w_\varepsilon(t,\cdot) \right|_{t = t_\ast} \leq 0. 
\end{equation}
Owing to the arbitrariness of $t_\ast$ by integrating in time we get 
$$
    \mathrm{Tot Var}_{N} w_\varepsilon(t,\cdot) \leq   \mathrm{Tot Var}_{N} w_\varepsilon(0,\cdot) \stackrel{\eqref{e:suptvN}}{\leq} 
    \mathrm{Tot Var} \, w_\varepsilon(0,\cdot) 
$$
and owing to~\eqref{e:suptvN} this yields~\eqref{e:tv}. To establish~\eqref{e:dertvN} we set 
\begin{equation} \label{e:g2}
   g(t) : =  \mathrm{Tot Var}_{N} w_\varepsilon(t,\cdot)
\end{equation}
and point out that, owing to {\sc Step 2B}, the function $g:[0, T  ] \to \mathbb R_+$ is Lipschitz continuous.  
Next, we fix $t_\ast$ at which $g$ is differentiable, recall {\sc Step 2A} and fix $x_1, \dots, x_N = N + V(u_+)t_\ast$ such that 
\begin{equation} \label{e:tastmax}
    \mathrm{Tot Var}_{N} w_\varepsilon(t_\ast,\cdot) =  |w_\varepsilon(t_\ast,x_{1}) - u_-| + \sum_{i=1}^{N-1} |w_\varepsilon(t_\ast,x_{i+1}) - w_\varepsilon(t_\ast,x_i)|. 
\end{equation}
Up to removing some intermediate points and relabeling the points (if needed) 
we find $m \leq N$ such that $x_1 < x_2<\dots < x_m=N+ V(u_+)t$ are all distinct points and one of the following two cases is verified:
\begin{itemize}
\item[i)] either  
\begin{equation} \label{e:inmezzo}
    w_\varepsilon (t_\ast, x_1) \ge u_-,  \quad w_\varepsilon (t_\ast, x_i) \ge w_\varepsilon(t_\ast, y) \; \text{for every $y \in [x_i, x_{i+1}]$, $i=1, \dots, m-1$ odd}
\end{equation}
and 
\begin{equation} \label{e:inmezzo2}
 w_\varepsilon (t_\ast, x_i) \leq w_\varepsilon(t_\ast, y) \; \text{for every $y \in [x_i, x_{i+1}]$ and $i=2, \dots, m-1$ even},
\end{equation}
\item[ii)] or 
$$
    w_\varepsilon (t_\ast, x_1) \leq u_-,  \quad w_\varepsilon (t_\ast, x_i) \leq w_\varepsilon(t_\ast, y) \; \text{for every $y \in [x_i, x_{i+1}]$, $i=1, \dots, m-1$ odd}
$$
and 
$$
 w_\varepsilon (t_\ast, x_i) \ge w_\varepsilon(t_\ast, y) \; \text{for every $y \in [x_i, x_{i+1}]$ and $i=2, \dots, m-1$ even},
$$
\end{itemize}
We now focus on case i) above. The proof in case ii) is similar. Note that in case i), recalling that $w_\varepsilon (t_\ast, x_m) = u_+$, we have 
\begin{equation} \label{e:moddeven}
\begin{split}
  g(t_\ast) \stackrel{\eqref{e:g2}}{=} \mathrm{Tot Var}_{N} w_\varepsilon(t_\ast,\cdot) &=
   w_\varepsilon(t_\ast,x_{1}) - u_- + \sum_{i=1}^{m-1} |w_\varepsilon(t_\ast,x_{i+1}) - w_\varepsilon(t_\ast,x_i)| \\
   &   = - u_-  + \left\{
   \begin{array}{ll}
   - u_+ + 2 \displaystyle{ \sum_{i=1}^{m-1} (-1)^{i+1}  w_\varepsilon (t_\ast, x_i) } & \text{$m$ even} \\
    u_+ + 2 \displaystyle{ \sum_{i=1}^{m-1} (-1)^{i+1}  w_\varepsilon (t_\ast, x_i)}  & \text{$m$ odd.} \\
\end{array}
 \right.\\
\end{split}
\end{equation}
We recall the notation~\eqref{e:X} and set 
\begin{equation} \label{e:effe}
    f(t) : = - u_-  + \left\{
   \begin{array}{ll}
   - u_+ + 2 \displaystyle{ \sum_{i=1}^{m-1} (-1)^{i+1}  w_\varepsilon (t_\ast, X_\varepsilon(t, x_i, t_\ast)) } & \text{$m$ even} \\
    u_+ + 2 \displaystyle{ \sum_{i=1}^{m-1} (-1)^{i+1}  w_\varepsilon (t_\ast, X_\varepsilon(t, x_i, t_\ast))}  & \text{$m$ odd.} \\
\end{array}
 \right.\\
\end{equation}
Note that $f(t) \leq g(t)$ for every $t \in [0, T]$, where $g$ is the same as in~\eqref{e:g2}.  By recalling~\eqref{e:moddeven} and Lemma~\ref{l:fg} we conclude that to establish~\eqref{e:dertvN} it suffices to show that  $f'(t_\ast) \leq 0$. \\
{\sc Step 4:} by combining~\eqref{e:ewgen} and~\eqref{e:effe} we get 
\begin{equation} \label{e:tesi}
\begin{split}
f'(t_\ast) & = 2  \sum_{i=1}^{m-1}   \frac{(-1)^{i+1} }{\varepsilon^2} \int_{x_i}^{+\infty} \eta' \left( \frac{x-y}{\varepsilon} \right) [V(w_\varepsilon (t_\ast, x_i)) - V(w_\varepsilon (t, y) )] u_\varepsilon (t_\ast, y) dy \\
& =
     \frac{2}{\varepsilon^2} \sum_{i=1}^{m-1} \int_{x_i}^{x_{i+1}} \sigma_{ i} (t_\ast, y) u_\varepsilon ( t_\ast, y) dy +  \frac{2}{\varepsilon^2}
      \int_{x_m}^{+\infty} \sigma_m (t_\ast, y)u_\varepsilon (t_\ast, y) dy \\
     \end{split}
\end{equation}
where 
\begin{equation}
\label{e:sigma1}
       \sigma_1 (t_\ast, y) =   
       \eta' \left( \frac{x_1 -y}{\varepsilon}\right)   [V(w_\varepsilon(t_\ast, x_1) ) - V(w_\varepsilon( t_\ast, y)) ]  
\end{equation}
and 
\begin{equation}
\begin{split}
\label{e:sigma}
       \sigma_i (t_\ast, y) & =   
       \sum_{j=1}^{i-1} 
     \left[ \sum_{k=1}^j (-1)^{k+1} 
       \eta' \left( \frac{x_k -y}{\varepsilon}\right)\right] [V(w_\varepsilon(t_\ast, x_j) - V(w_\varepsilon(t_\ast, x_{j+1})]\\ &
      \qquad +
       \left[ \sum_{k=1}^i (-1)^{k+1} \eta' \left( \frac{x_k -y}{\varepsilon}\right)  \right] [V(w_\varepsilon(t_\ast, x_i) ) - V(w_\varepsilon(t_\ast, y)) ]   
        \quad \text{$i\ge2$}.
\end{split}
\end{equation}
To verify the above formula we rely on an induction argument. By a straightforward computation,  the formula is verified for $i=2$. Let us now assume that it holds at the $i$-th step, and  check that it is also verified at the $(i+1)$-th.
We have 
\begin{equation*}
\begin{split}
      \sigma_{i +1}(t_\ast, y) & =  \sigma_i (t_\ast,  y) + (-1)^{i+2}  \eta' \left( 
      \frac{x_{i+1} -y}{\varepsilon}\right)   [V(w_\varepsilon(t_\ast, x_{i+1}) ) - V(w_\varepsilon(t_\ast, y)) ]  \\ & 
      \stackrel{\eqref{e:sigma}}{=}    
       \sum_{j=1}^{i-1} 
     \left[ \sum_{k=1}^j (-1)^{k+1} 
       \eta' \left( \frac{x_k -y}{\varepsilon}\right)\right] [V(w_\varepsilon(t_\ast, x_j) - V(w_\varepsilon(t_\ast, x_{j+1})]\\ &
      \qquad +
       \left[ \sum_{k=1}^i (-1)^{k+1} \eta' \left( \frac{x_k -y}{\varepsilon}\right)  \right] [V(w_\varepsilon(t_\ast, x_i) ) - V(w_\varepsilon(t_\ast, y)) ]   \\
& \qquad +  (-1)^{i+2}  \eta' \left( 
      \frac{x_{i+1} -y}{\varepsilon}\right)   [V(w_\varepsilon(t_\ast, x_{i+1}) ) - V(w_\varepsilon(t_\ast, y)) ]
      \end{split}
\end{equation*}
and hence 
\begin{equation*}     \begin{split}
    \sigma_{i +1}(t_\ast, y) & =   \sum_{j=1}^{i-1} 
     \left[ \sum_{k=1}^j (-1)^{k+1} 
       \eta' \left( \frac{x_k -y}{\varepsilon}\right)\right] [V(w_\varepsilon(t_\ast, x_j) - V(w_\varepsilon(t_\ast, x_{j+1})]\\ &
      \qquad +
       \left[ \sum_{k=1}^i (-1)^{k+1} \eta' \left( \frac{x_k -y}{\varepsilon}\right)  \right] [V(w_\varepsilon(t_\ast, x_i) ) - V(w_\varepsilon(t_\ast, x_{i+1})) ] \\  &\qquad +   \sum_{k=1}^{i+1} (-1)^{k+1} \eta' \left( \frac{x_k -y}{\varepsilon}\right)   [V(w_\varepsilon(t_\ast, x_{i+1}) ) - V(w_\varepsilon(t_\ast, y)) ] ,
\end{split}
\end{equation*}
which is consistent with~\eqref{e:sigma}. \\
{\sc Step 5:} we  recall that $u_\varepsilon \ge 0$ owing to~\eqref{e:mp}, and by plugging this inequality in~\eqref{e:tesi} we conclude that to show that $f'(t_\ast) \leq 0$ it suffices to show that   
\begin{equation}\label{e:sigmai}
       \sigma_i (t_\ast, \cdot) \leq 0 \;  \text{a.e. on $]x_i, x_{i+1}[$ for $i=1, \dots, m-1$}, \qquad \sigma_m(t_\ast, \cdot) \leq 0 \; \text{on $]x_m, +\infty[$ }. 
\end{equation}       
To establish~\eqref{e:sigmai} we first focus on $i=1$. Note that~\eqref{e:inmezzo} yields $w_\varepsilon(t_\ast, y) \leq 
w_\varepsilon(t_\ast, x_1)$ for every $y \in [x_1, x_2]$ and by using the inequality $V' \leq 0$ we conclude that $V(w_\varepsilon(t_\ast, x_1)) \leq 
V(w_\varepsilon(t_\ast, y))$  for every $y \in [x_1, x_2]$. By recalling the inequality $\eta'\ge 0$ we conclude that $\sigma_1(t_\ast, \cdot) \leq 0$ on $[x_1, x_2]$.

We now consider the case $i>1$. Assume for a moment that we have established the inequalities 
\begin{equation}\label{e:j1} 
 \sum_{k=1}^j (-1)^{k+1} 
       \eta' \left( \frac{x_k -y}{\varepsilon}\right) \ge 0 \quad  \text{if $j=1, \dots, m$, $j$ odd}
\end{equation}  
and
\begin{equation}       \label{e:j2}
        \sum_{k=1}^j (-1)^{k+1}  \eta' \left( \frac{x_k -y}{\varepsilon}\right)
               \leq 0 \quad \text{if $j=2, \dots, m$, $j$ even.} 
\end{equation}
By combining~\eqref{e:j1} with~\eqref{e:inmezzo},~\eqref{e:j2} with~\eqref{e:inmezzo2} and recalling the inequality $V' \leq 0$ we get that the first and (choosing $j=i$) the second term in~\eqref{e:sigma} are nonpositive. Hence, to establish~\eqref{e:sigmai} we are left to establish~\eqref{e:j1} and~\eqref{e:j2}. 
We first establish~\eqref{e:j1}. If $j=1$, then 
\begin{equation} \label{e:jodd1}
     \sum_{k=1}^1 (-1)^{k+1} 
       \eta' \left( \frac{x_k -y}{\varepsilon}\right) =  \eta' \left( \frac{x_1 -y}{\varepsilon}\right) \stackrel{\eqref{e:eta}}{\ge} 0.  
\end{equation}
If $j\ge 3$ is odd, then we use the equality 
\begin{equation}\label{e:jodd2} \begin{split}
      \sum_{k=1}^j (-1)^{k+1}  \eta' \left( \frac{x_k -y}{\varepsilon}\right) =  
       \eta' \left( \frac{x_1 -y}{\varepsilon}\right)  &+ 
       \sum_{h=2, \, h \; \text{even}}^{j-1}    \left[ \eta' \left( \frac{x_{h+1} -y}{\varepsilon}\right)    - \eta' \left( \frac{x_{h} -y}{\varepsilon}\right) \right] \\
       & \text{for every $j=3, \dots, m$, $j$ odd}.
       \end{split}
\end{equation}
Since $\eta' \ge 0$ and $\eta'$ is a  a non-decreasing function, each of the terms in the above sum is non-negative, which combined with~\eqref{e:jodd1} establishes~\eqref{e:j1}. We now turn to~\eqref{e:j2}. We have 
$$
      \sum_{k=1}^j (-1)^{k+1}  \eta' \! \left( \frac{x_k -y}{\varepsilon}\right) \! = \! \!  \! \!  \!  \!
       \sum_{h=1, \, h \; \text{odd}}^{j-1}   \left[ \eta' \! \left(\! \frac{x_h -y}{\varepsilon}\right)  
      \!  - \! \eta'\! \left( \frac{x_{h+1} -y}{\varepsilon}\right)\! \right]  \, \text{$\forall \, j=2, \dots, m$, $j$ even}.
$$
Owing to the assumption that $\eta'$ is a non-decreasing function, each of the terms in the above sum is non-positive and this establishes~\eqref{e:j2}.  This concludes the proof of the inequality $f'(t_\ast) \leq 0$ and hence of~\eqref{e:tv}. 
\subsection{Conclusion of the proof of~\eqref{e:tv}}\label{ss:app}
To complete the proof of~\eqref{e:tv} we are left to remove the assumptions~\eqref{e:hpkernel} and~\eqref{e:hpdato}. We can use the same approximation argument as in {\sc Step 3} of the proof of~\cite[Theorem 1.1]{ColomboCrippaMarconiSpinolo2}, the only difference is that we also have to get the second condition in~\eqref{e:hpdato}. To achieve this, we can define $u_{0n}$ as the convolution of the compactly supported kernel $\rho_n$ with 
$$
    \widetilde u_{0n} : = 
    \left\{
    \begin{array}{ll}
    u_0(x) & x \leq n \\
    u_+ & x \ge n \\  
    \end{array}
    \right.
$$
In this way~\eqref{e:hpdato} is verified by $u_{0n}$ for some $R$ depending on $n$ and on the size of the support of $\rho_n$. 

\section*{Acknowledgments}
 This note is based on a talk given by L.V.S. at the ``Journ\'ees des EDP" 2023. L.V.S. warmly thanks the organizers for the kind invitation. G.C.  is supported by SNF Project 212573 FLUTURA-Fluids, Turbulence, Advection. E.M. and L.V.S. are members of the PRIN 2022 PNRR Project P2022XJ9SX and of the GNAMPA group of INDAM. E.M. is supported by the European Union Horizon 20 research and innovation program under the Marie Sklodowska-Curie grant No. 101025032. LVS is a member of the PRIN 2020 Project 20204NT8W4, of the PRIN 2022 Project 2022YXWSLR, and of the CNR FOE 2022 Project STRIVE.


\bibliographystyle{plain}

\bibliography{jedp}

\begin{thebibliography}{10}

\bibitem{ACT}
P.~Amorim, R.~M. Colombo, and A.~Teixeira.
\newblock On the numerical integration of scalar nonlocal conservation laws.
\newblock {\em ESAIM Math. Model. Numer. Anal.}, 49(1):19--37, 2015.

\bibitem{Sedimentation}
F.~Betancourt, R.~B\"urger, K.~H. Karlsen, and E.~M. Tory.
\newblock On nonlocal conservation laws modelling sedimentation.
\newblock {\em Nonlinearity}, 24(3):855--885, 2011.

\bibitem{BlandinGoatin}
S.~Blandin and P.~Goatin.
\newblock Well-posedness of a conservation law with non-local flux arising in
  traffic flow modeling.
\newblock {\em Numer. Math.}, 132(2):217--241, 2016.

\bibitem{BressanShen}
A.~Bressan and W.~Shen.
\newblock On traffic flow with nonlocal flux: a relaxation representation.
\newblock {\em Arch. Ration. Mech. Anal.}, 237(3):1213--1236, 2020.

\bibitem{BressanShen2}
A.~Bressan and W.~Shen.
\newblock Entropy admissibility of the limit solution for a nonlocal model of
  traffic flow.
\newblock {\em Commun. Math. Sci.}, 19(5):1447--1450, 2021.

\bibitem{CP}
P.~Calderoni and M.~Pulvirenti.
\newblock Propagation of chaos for {B}urgers' equation.
\newblock {\em Ann. Inst. H. Poincar\'{e} Sect. A (N.S.)}, 39(1):85--97, 1983.

\bibitem{Chiarello}
F.~A. Chiarello.
\newblock An overview of non-local traffic flow models.
\newblock In {\em Mathematical descriptions of traffic flow: micro, macro and
  kinetic models. Selected papers based on the presentations of the
  mini-symposium at ICIAM 2019, Valencia, Spain, July 2019}, pages 79--91.
  Cham: Springer, 2021.

\bibitem{OttoAutori}
G.M. Coclite, M~Colombo, G~Crippa, N.~De~Nitti, A.~Keimer, E.~Marconi,
  L.~Pflug, and L.V. Spinolo.
\newblock Ole{\u\i}nik-type estimates for nonlocal conservation laws and
  applications to the nonlocal-to-local limit.
\newblock {\em Preprint ArXiv:2304.01309}, 2023.

\bibitem{CocliteCoronDNKeimerPflug}
G.M. Coclite, J.-M. Coron, N.~De~Nitti, A.~Keimer, and L.~Pflug.
\newblock A general result on the approximation of local conservation laws by
  nonlocal conservation laws: The singular limit problem for exponential
  kernels.
\newblock {\em Ann. Inst. H. Poincar\'{e} C Anal. Non Lin\'{e}aire}, 2022.

\bibitem{CDNKP}
G.M. Coclite, N.~De~Nitti, A.~Keimer, and L.~Pflug.
\newblock Singular limits with vanishing viscosity for nonlocal conservation
  laws.
\newblock {\em Nonlinear Anal.}, 211, 2021.

\bibitem{CKR}
G.M. Coclite, K.H. Karlsen, and N.H. Risebro.
\newblock A nonlocal lagrangian traffic flow model and the zero-filter limit.
\newblock {\em Preprint ArXiv:2302.03889}, 2023.

\bibitem{ColomboCrippaGraffSpinolo}
M.~Colombo, G.~Crippa, M.~Graff, and L.~V. Spinolo.
\newblock On the role of numerical viscosity in the study of the local limit of
  nonlocal conservation laws.
\newblock {\em ESAIM, Math. Model. Numer. Anal.}, 55(6):2705--2723, 2021.

\bibitem{ColomboCrippaMarconiSpinolo}
M.~Colombo, G.~Crippa, E.~Marconi, and L.~V. Spinolo.
\newblock Local limit of nonlocal traffic models: convergence results and total
  variation blow-up.
\newblock {\em Ann. Inst. H. Poincar\'{e} C Anal. Non Lin\'{e}aire},
  38(5):1653--1666, 2021.

\bibitem{ColomboCrippaMarconiSpinolo2}
M.~Colombo, G.~Crippa, E.~Marconi, and L.~V. Spinolo.
\newblock Nonlocal traffic models with general kernels: singular limit, entropy
  admissibility, and convergence rate.
\newblock {\em Arch. Ration. Mech. Anal.}, 247(2):32, 2023.

\bibitem{ColomboCrippaSpinolo}
M.~Colombo, G.~Crippa, and L.~V. Spinolo.
\newblock On the singular local limit for conservation laws with nonlocal
  fluxes.
\newblock {\em Arch. Rat. Mech. Anal.}, 233(3):1131--1167, 2019.

\bibitem{ColomboGaravelloMercier}
R.~M. Colombo, M.~Garavello, and M.~L\'ecureux-Mercier.
\newblock A class of nonlocal models for pedestrian traffic.
\newblock {\em Math. Models Methods Appl. Sci.}, 22(4):1150023, 34, 2012.

\bibitem{ColomboHertyMercier}
R.~M. Colombo, M.~Herty, and M.~Mercier.
\newblock Control of the continuity equation with a non local flow.
\newblock {\em ESAIM Control Optim. Calc. Var.}, 17(2):353--379, 2011.

\bibitem{CLM}
G.~Crippa and M.~L{\'e}cureux-Mercier.
\newblock Existence and uniqueness of measure solutions for a system of
  continuity equations with non-local flow.
\newblock {\em NoDEA Nonlinear Differential Equations Appl.}, 20(3):523--537,
  2013.

\bibitem{Dafermos_book}
C.~M. Dafermos.
\newblock {\em Hyperbolic conservation laws in continuum physics}, volume 325
  of {\em Grundlehren der Mathematischen Wissenschaften [Fundamental Principles
  of Mathematical Sciences]}.
\newblock Springer-Verlag, Berlin, fourth edition, 2016.

\bibitem{DHSS}
Q.~Du, K.~Huang, J.~Scott, and W.~Shen.
\newblock A space-time nonlocal traffic flow model: relaxation representation
  and local limit.
\newblock {\em Discrete Contin. Dyn. Syst.}, 43(9):3456--3484, 2023.

\bibitem{FGKP}
J.~Friedrich, S.~G\"ottlich, A.~Keimer, and L.~Pflug.
\newblock Conservation laws with nonlocal velocity - the singular limit
  problem.
\newblock {\em Preprint ArXiv:2210.12141}, 2022.

\bibitem{KeimerPflug}
A.~Keimer and L.~Pflug.
\newblock Existence, uniqueness and regularity results on nonlocal balance
  laws.
\newblock {\em J. Differential Equations}, 263(7):4023--4069, 2017.

\bibitem{KeimerPflug2}
A.~Keimer and L.~Pflug.
\newblock On approximation of local conservation laws by nonlocal conservation
  laws.
\newblock {\em J. Math. Anal. Appl.}, 475(2):1927--1955, 2019.

\bibitem{KPpreprint}
A.~Keimer and L.~Pflug.
\newblock On the singular limit problem for nonlocal conservation laws: A
  general approximation result for kernels with fixed support.
\newblock {\em Preprint ArXiv:2310.09041}, 2023.

\bibitem{Kruzkov}
S.~N. Kru{\v{z}}kov.
\newblock First order quasilinear equations with several independent variables.
\newblock {\em Mat. Sb. (N.S.)}, 81 (123):228--255, 1970.

\bibitem{Kuznetsov}
N.~N. Kuznetsov.
\newblock Accuracy of some approximate methods for computing the weak solutions
  of a first-order quasi-linear equation.
\newblock {\em U.S.S.R. Comput. Math. Math. Phys.}, 16(6):105--119, 1978.

\bibitem{LW}
M.~Lighthill and G.~Whitham.
\newblock On kinematic waves. {II. A} theory of traffic flow on long crowded
  roads.
\newblock {\em Proceedings of the Royal Society of London: Series A.},
  229:317--345, 1955.

\bibitem{R}
P.~I. Richards.
\newblock Shock waves on the highway.
\newblock {\em Operations Res.}, 4:42--51, 1956.

\bibitem{Zumbrun}
K.~Zumbrun.
\newblock On a nonlocal dispersive equation modeling particle suspensions.
\newblock {\em Quart. Appl. Math.}, 57(3):573--600, 1999.

\end{thebibliography}

\end{document}